\def\A{\mathbb{A}}
\def\B{\mathbb{B}}
\def\P{\mathbb{P}}
\let\myacute=\'
\def\<{\langle}
\def\>{\rangle}
\def\cL{\mathcal{L}}
\def \begindm {\begin{displaymath}}
\def \enddm {\end{displaymath}}
\def\A{\mathbb{A}}
\def\Q{\mathbb{Q}}
\def\cL{\mathcal{L}}
\def\cM{\mathcal M}
\def\cO{\mathcal{O}}
\def\cA{\mathcal{A}}
\newtheorem{thm}{Theorem}[section]
\newtheorem{lemma}{Lemma}[section]
\newtheorem{cor}{Corollary}[section]
\newtheorem{Def}{Definition}[section]
\numberwithin{equation}{section}
\long\def\symbolfootnote[#1]#2{\begingroup\def\thefootnote{\fnsymbol{footnote}}\footnote[#1]{#2}\endgroup}
\title[Axioms for Restricted Products]{Axioms for Commutative Unital Rings elementarily Equivalent to Restricted Products of Connected Rings}
\author[J. Derakhshan]{Jamshid Derakhshan}
\address{St Hilda's College, University of Oxford, Cowley Place, Oxford OX4 1DY, UK}
\email{derakhsh@maths.ox.ac.uk}
\author[A. Macintyre]{Angus Macintyre${}^{\dag}$}
\address{School of Mathematical Sciences, Queen Mary, University of London, Mile End Road, London E1 4NS, and 
School of Mathematics, University of Edinburgh, James Clerk Maxwell Building, Peter Guthrie Tait Road
Edinburgh EH9 3FD, UK}
\email{angus@eecs.qmul.ac.uk}
\thanks{${}^{\dag}$Supported by a Leverhulme Emeritus Fellowship}
\begin{document}

\keywords{}

\subjclass[2000]{Primary 03C10,03C60,11R56,11R42,11U05,11U09, Secondary 11S40,03C90}
    
\begin{abstract} 
We give axioms in the language of rings augmented by a 1-ary predicate symbol $Fin(x)$ with intended interpretation 
in the Boolean algebra of idempotents as the ideal of finite elements, i.e. finite unions of atoms. We prove that any 
commutative unital ring satisfying these axioms is elementarily equivalent to a restricted product of 
connected rings. This is an extension of the results in \cite{elem-prod} for products. While the results in \cite{elem-prod} give a converse to the Feferman-Vaught theorem for products, our results prove the same for restricted products. We give a complete set of axioms in the language of rings for the ring of adeles of a number field, uniformly in the number field.

\end{abstract}

\maketitle

\section{\bf Introduction}\label{sec-introduction}

This paper is a natural sequel to \cite{elem-prod} and the main results and proofs are natural extensions of those in \cite{elem-prod}. In many cases we will simply refer to the material from \cite{elem-prod}

\cite{elem-prod} deals with the model theory of products of connected unital rings, and can be construed as providing a partial converse to the Feferman-Vaught Theorem \cite{FV} in the special case of products $\prod_{i\in I} R_i, i\in I,$ where $R_i$ are connected commutative unital rings and $I$ an index set (Recall that a commutative ring $R$ is connected if $0,1$ are the only idempotents of $R$).  The converse concerns the issue of providing axioms for {\it rings elementarily equivalent to rings $\prod_{i\in I} R_i$ as above}.  The solution of this problem is given in \cite{elem-prod} and, inter alia has applications to non-standard models of PA (first order Peano arithmetic) in \cite{PDAJM}.

In this paper we start with rings $\prod_{i\in I} R_i, i\in I,$ as above, but work with certain subrings, namely restricted products with respect to a formula $\varphi(x)$ of the language of rings (in a single variable $x$), 
defined as the set of all $f\in \prod_{i\in I} R_i$ so that 
$\{i: R_i \models \varphi(f(i))\}$ is cofinite. Provided that $\varphi(x)$ defines a unital subring of each $R_i$, the above subset is in fact a subring of $\prod_{i\in I} R_i$ (not in general definable). 

We obtain, for restricted products, results exactly analogous to those of \cite{elem-prod} for products. 
Given $\varphi(x)$, we provide axioms in the language of rings augmented by a predicate $Fin(x)$, and prove that
any commutative unital ring satisfying these axioms is elementarily equivalent to a restricted product, with respect to 
$\varphi(x)$, of connected rings. The standard interpretation of $Fin(x)$ in any Boolean algebra, and in particular in the Boolean algebra of idempotents of $R$ is the ideal of finite elements, i.e. finite unions of atoms.

The canonical example of a such a restricted product, very important in number theory (see Cassels and Frohlich \cite{CF}), is $\A_K$, the ring of adeles over a number field $K$. Here $I$ is the set of normalized absolute values $v$ on $K$ up to equivalence, $R_i$ is the completion $K_v$ of $K$ at $v$, and $\varphi(x)$ is a formula of the language of rings that defines, uniformly for all $v$, the valuation ring $\cO_v$ of $K_v$. (That there is such a $\varphi(x)$ is nontrivial, and it is an important result that there is an $\exists \forall$-formula $\varphi(x)$ that works uniformly for all $\cO_v$, and hence for all adele rings  uniformly in $K$, see \cite{CDLM} and \cite{DM-ad}).

In the case of adeles $\A_K$, the set of idempotents with finite support is definable by a formula of the language of rings independently of $K$ (cf. \cite{DM-ad}, and a new proof given at the end of this paper in a ring-theoretic situation). Thus we can derive axioms in the language of rings for the adeles, uniformly in $K$.

\section{\bf The Boolean algebra of idempotents of a ring}

We shall denote the language of rings by $\cL_{rings}=\{+,.,0,1\}$ and the language of Boolean algebras by 
$\mathcal{L}_{Boolean}=\{\vee,\wedge,\neg,0,1\}$. We start by recalling the definition of a restricted product of structures with respect to a formula (cf. \cite{DM-ad}, \cite{DM-supp}, \cite{DM-ad2}).

\begin{Def}\label{def-rest} Let $\cL$ be a language and $(\cM_i)_{i\in I}$ a family of $\cL$-structures. Let $\varphi(x)$ be a 
$\cL_{rings}$-formula in the single variable $x$. The restricted direct product of $\cM_i$ with respect to $\varphi$ (also called product restricted by $\varphi$) is the subset of the product $\prod_{i\in I} \cM_i$ consisting of all $f$ such that $\cM_i \models \varphi(f(i))$ for all but finitely many $i\in I$.\end{Def}

We denote this restricted product by $\prod_{i\in I}^{(\varphi)} \cM_i$. It is a substructure of the generalized product defined by Feferman and Vaught in \cite{FV} provided that $\varphi(\cM_i)$ is a substructure of $\cM_i$ for all $i\in I$. The results in \cite{FV} and \cite{DM-ad}, \cite{DM-ad2}, \cite{DM-supp} yield general quantifier eliminations for such restricted products, where $\cL$ is any many-sorted language. One can deduce, among other results, quantifier elimination for adeles, and results on definable subsets of adeles and their measures.


\

\subsection{Atoms and Stalks}

\

\

We follow as much as possible the development from Section 1 of \cite{elem-prod}.

\begin{Def} Let $R$ be a commutative unital ring. The set $\{x: x=x^2\}$ of idempotents
is a Boolean algebra, denoted by $\B$, with operations  
$$e \wedge f=ef,$$
$$\neg e=1-e,$$
$$0=0,$$
$$1=1,$$
$$e\vee f=1-(1-e)(1-f)=e+f-ef.$$
It carries a partial ordering defined by 
$e\leq f\Leftrightarrow ef=e$ (which is $\cL_{rings}$-definable). The {\it atoms} of $\B$ are the minimal idempotents (with respect to the ordering) that are not equal to $0,1$. 
(In fact we {\it assume} $0\neq 1$).\end{Def}

Note that if $R$ is a product, over an index set $I$, then $\B$ is isomorphic to the Boolean algebra of subsets of $I$ via 
"characteristic functions".

\begin{lemma}\label{lem1} For any $e$ in $\B$ we have $R/(1-e)R \cong eR \cong R_e$, where $R_e$ is the localization of $R$ at $\{e^n: n\geq 0\}$.
\end{lemma}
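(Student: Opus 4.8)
The plan is to construct two ring homomorphisms and show each is an isomorphism. For an idempotent $e\in\B$, multiplication by $e$ gives a map $R\to eR$, $r\mapsto er$; this is a surjective ring homomorphism (it is additive and multiplicative because $e^2=e$, and it sends $1$ to $e$, which is the multiplicative identity of the ring $eR$). Its kernel is $\{r\in R: er=0\}$. First I would identify this kernel as $(1-e)R$: clearly $e(1-e)r=0$ for all $r$, so $(1-e)R\subseteq\ker$; conversely if $er=0$ then $r=(1-e)r+er=(1-e)r\in(1-e)R$. Hence the first isomorphism theorem yields $R/(1-e)R\cong eR$.

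For the second isomorphism, I would consider the localization map $R\to R_e$. Since $e$ becomes a unit in $R_e$ and $e$ is idempotent, $e$ must map to $1$ in $R_e$ (from $e\cdot e=e$, cancel a unit). Consequently $1-e$ maps to $0$, so the localization map factors through $R/(1-e)R\cong eR$, giving a map $eR\to R_e$. To see this is an isomorphism, I would exhibit the inverse: on $eR$, the element $e$ already acts as the identity and is its own inverse among the powers $\{e^n:n\ge 0\}$ (since $e^n=e$ for $n\ge 1$), so every element $\{e^n:n\ge 0\}$ is already invertible in $eR$; by the universal property of localization the map $R\to eR$ extends uniquely to $R_e\to eR$, and one checks the two maps are mutually inverse. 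Alternatively, one can argue directly that $R_e$ consists of fractions $r/e^n = er/e = er$ up to the obvious identification, making the natural map $eR\to R_e$ a bijection.

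The only mild subtlety — and the step I would be most careful about — is the second part: making precise that inverting an idempotent does nothing beyond the projection onto the $eR$ factor, i.e. that $R_e$ is already "saturated" with respect to $e$ once $1-e$ has been killed. This is where one uses that the multiplicatively closed set $\{e^n:n\ge 0\}$ is, after reduction mod $(1-e)$, just $\{1\}$ together with (possibly) $0$ if $e=0$; under the standing assumption that we work with a genuine idempotent the argument goes through cleanly via the universal property. Everything else is a routine application of the first isomorphism theorem and elementary idempotent identities, so no serious obstacle is expected.
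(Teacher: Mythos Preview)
Your argument is correct and entirely standard: the kernel computation for $r\mapsto er$ and the use of the universal property of localization (noting that $e$ becomes the identity once $1-e$ is killed) are exactly the right ingredients, and the ``mild subtlety'' you flag is genuinely harmless since $\{e^n:n\ge 0\}=\{1,e\}$ maps to units in $eR$. The paper does not actually give a proof here but simply cites Lemma~1 of \cite{elem-prod}, so your self-contained treatment is a perfectly adequate replacement for that reference.
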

\begin{proof} See Lemma 1 in \cite{elem-prod}.\end{proof}

We call $R_e$ the stalk of $R$ at $e$. Of special important are the $R_e$ for {\it atoms} $e$.

\

Now we gradually impose axioms on $R$, in order to get a converse to Feferman-Vaught for restricted products.

\

{\bf Axiom 1.} $\B$ is atomic.

\

{\bf Notes.} This holds if $R$ is a restricted product of {\it connected} rings. One does not even need the restricting formula $\varphi(x)$ to be definable. Moreover $R$ and the unrestricted product have the same idempotents. The basic example is $\A_K$ embedded in $\prod_{v} K_v$.

\

Now we go through a series of consequences of the current axioms, and additions of new axioms.

\begin{lemma}\label{lem2} If $f\in \B$, and $f\neq 0$, then $f=\bigvee\{e: \text{e ~ an~ atom}, ~ e\leq f\}$, 
(where $\bigvee$ is union or supremum).
\end{lemma}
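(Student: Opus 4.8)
The plan is to show that $f$ is the \emph{least} upper bound in $\B$ of the set
$A_f := \{e\in\B : e\text{ is an atom and } e\le f\}$. Note that the statement asserts not only an equality but also, implicitly, that the supremum $\bigvee A_f$ exists in $\B$; since $\B$ need not be a complete Boolean algebra, this existence is genuinely part of what must be proved, so the natural strategy is to exhibit $f$ itself as the least upper bound of $A_f$. This is the exact analogue of Lemma 2 of \cite{elem-prod}, and I expect the argument to transfer verbatim.

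First I would observe that $f$ is an upper bound of $A_f$: by definition every $e\in A_f$ satisfies $e\le f$. It then remains to show that any $g\in\B$ with $e\le g$ for all $e\in A_f$ satisfies $f\le g$. Recall that in $\B$ the relation $f\le g$ is equivalent to $f\wedge\neg g=0$ (in ring terms, $f(1-g)=0$, i.e. $f=fg$), so it suffices to rule out $f\wedge\neg g\neq 0$. Assume for contradiction that $f\wedge\neg g\neq 0$. By Axiom~1 the Boolean algebra $\B$ is atomic, so there is an atom $e$ with $e\le f\wedge\neg g$. Then $e\le f$, so $e\in A_f$ and hence $e\le g$ because $g$ is an upper bound of $A_f$; but also $e\le\neg g$, whence $e=e\wedge e\le g\wedge\neg g=0$, contradicting that an atom is nonzero. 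Therefore $f\wedge\neg g=0$, i.e. $f\le g$, and so $f=\bigvee A_f$.

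I do not expect a real obstacle here: the only point that requires care is precisely the one flagged above, namely that the claim packages together the existence of the supremum and its value, so one should produce $f$ directly as the least upper bound rather than argue via some maximality of the candidate set. (In the degenerate case $\B=\{0,1\}$ there are no atoms and the statement is vacuous/excluded; Axiom~1 is exactly what guarantees that every nonzero element — and in particular $f$ — lies above an atom, which is all the above argument needs.)
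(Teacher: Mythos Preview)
Your argument is correct and is the standard proof that in an atomic Boolean algebra every nonzero element is the supremum of the atoms below it. The paper itself does not give an independent proof but simply cites Lemma~2 of \cite{elem-prod}, so your direct argument via Axiom~1 is exactly what underlies that citation.
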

\begin{proof} This is Lemma 2 in \cite{elem-prod}.\end{proof}

We turn to Boolean values and follow 1.3 of \cite{elem-prod}.

\begin{Def} Let $\Theta(x_1,\dots,x_n)$ be a formula of the language of rings, and $f_1,\dots,f_n\in R$. Then 
$[[\Theta(f_1,\dots,f_n)]]$ is defined as 
$$\bigvee_{e} \{e: e~\text{an~atom},~R_e\models \Theta((f_1)_e,\dots,(f_n)_e)\}$$
provided $\bigvee$ exists in $\B$. Here $(f)_e$ is the natural image of $f$ in $R_e$ (or, seen from perspective of Lemma \ref{lem1}, $f+(1-e)R$).\end{Def}

\

{\bf Axiom 2.} $[[\Theta(f_1,\dots,f_n)]]$ exists (as an element of $\B$).

\

{\bf Notes.} If $R$ is a product of structures then $\B$ is complete, however completeness of a Boolean algebra 
is not a first-order property. 

Axiom 2 is a substitute for completeness (and follows from it). 

Axiom 2 is true in a restricted product of connected rings with respect to a given formula $\varphi(\bar x)$.

\

\subsection{Boolean Values and Patching}

\

\

The $[[\Phi(f_1,\dots,f_n)]]$ are in $\B$, and occur in \cite{FV} in the context of products, 
with a different notation. The $[[...]]$ notation comes from Boolean valued model theory \cite{}.

The next Lemmas come from 1.4 of \cite{elem-prod}.

\begin{lemma}\label{lem3-5} Let $\Theta_1,\Theta_2,\Theta_3$ be $\cL_{rings}$-formulas in the variables $x_1,\dots,x_n$. 
Then for any $f_1,\dots,f_n \in R$, \begin{itemize}
\item $[[(\Theta_1 \wedge \Theta_2)(f_1,\dots,f_n)]]=[[\Theta_1(f_1,\dots,f_n)]]\wedge [[\Theta_2(f_1,\dots,f_n)]]$,
\item $[[(\neg \Theta)(f_1,\dots,f_n)]]=\neg [[\Theta(f_1,\dots,f_n)]],$
\item $[[(\Theta_1 \vee \Theta_2)(f_1,\dots,f_n)]]=[[\Theta_1(f_1,\dots,f_n)]] \vee [[\Theta_2(f_1,\dots,f_n)]]$.
\end{itemize}\end{lemma}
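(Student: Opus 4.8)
The statement to prove is Lemma~\ref{lem3-5}, the homomorphism properties of the Boolean-value operator $[[\,\cdot\,]]$ with respect to $\wedge$, $\neg$, and $\vee$.

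\medskip

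The plan is to reduce each identity to a pointwise (atom-by-atom) comparison inside the Boolean algebra $\B$. By Axiom~1, $\B$ is atomic, so by Lemma~\ref{lem2} two elements of $\B$ are equal if and only if they dominate exactly the same atoms. Hence it suffices, for each atom $e$, to check that $e \leq [[\Theta_1(\bar f)]] \wedge [[\Theta_2(\bar f)]]$ precisely when $e \leq [[(\Theta_1\wedge\Theta_2)(\bar f)]]$, and similarly for the other two cases. First I would record the elementary observation that for an atom $e$ and any formula $\Theta$, $e \leq [[\Theta(\bar f)]]$ if and only if $R_e \models \Theta(\bar f_e)$: one direction is immediate from the definition of $[[\,\cdot\,]]$ as the supremum over atoms $e'$ with $R_{e'}\models\Theta$; the other follows because distinct atoms are disjoint ($e\wedge e'=0$), so an atom $e$ lies below a join of atoms iff it is one of them. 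This requires Axiom~2 to guarantee the supremum defining $[[\Theta(\bar f)]]$ actually exists in $\B$.

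\medskip

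Granting that, the conjunction case is immediate: $R_e \models (\Theta_1\wedge\Theta_2)(\bar f_e)$ iff $R_e\models\Theta_1(\bar f_e)$ and $R_e\models\Theta_2(\bar f_e)$, i.e. iff $e\leq[[\Theta_1(\bar f)]]$ and $e\leq[[\Theta_2(\bar f)]]$, i.e. iff $e\leq[[\Theta_1(\bar f)]]\wedge[[\Theta_2(\bar f)]]$. The negation case uses the same equivalence together with the fact that for a stalk $R_e$ at an atom, which is a genuine structure with $0\neq 1$, exactly one of $R_e\models\Theta(\bar f_e)$ and $R_e\models\neg\Theta(\bar f_e)$ holds; combined with Lemma~\ref{lem2} (every nonzero $f\in\B$ is the join of the atoms below it, so complementation in $\B$ corresponds at the level of atoms to set-theoretic complement of the supporting atom sets) this gives $[[(\neg\Theta)(\bar f)]]=\neg[[\Theta(\bar f)]]$. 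The disjunction identity then follows formally from the first two via De~Morgan: $\Theta_1\vee\Theta_2$ is logically equivalent to $\neg(\neg\Theta_1\wedge\neg\Theta_2)$, and $[[\,\cdot\,]]$ respects logical equivalence (again by the atom-wise characterization, since $R_e$ is a structure and satisfaction is invariant under equivalence), so $[[(\Theta_1\vee\Theta_2)(\bar f)]] = \neg([[\neg\Theta_1(\bar f)]]\wedge[[\neg\Theta_2(\bar f)]]) = \neg(\neg[[\Theta_1(\bar f)]]\wedge\neg[[\Theta_2(\bar f)]]) = [[\Theta_1(\bar f)]]\vee[[\Theta_2(\bar f)]]$.

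\medskip

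The only genuinely delicate point is the negation clause: the equality $[[(\neg\Theta)(\bar f)]]=\neg[[\Theta(\bar f)]]$ hinges on the atom set below $[[\Theta(\bar f)]]$ and the atom set below $[[(\neg\Theta)(\bar f)]]$ being complementary \emph{and} together exhausting all atoms — that is, on every atom $e$ satisfying exactly one of $\Theta$, $\neg\Theta$ in $R_e$, which in turn rests on $R_e$ being a legitimate first-order structure (nontrivial, $0 \neq 1$, as flagged in the running assumptions) and on $\B$ being atomic so that $\neg$ in $\B$ is correctly computed pointwise via Lemma~\ref{lem2}. Everything else is bookkeeping. Since this is Lemma~1.3.5 (``Lemmas 3--5'') of \cite{elem-prod} transported verbatim to the restricted-product setting — and the restriction formula $\varphi$ plays no role here, the Boolean algebra of idempotents being unchanged — the cleanest write-up simply cites \cite{elem-prod} after indicating that the atom-wise argument goes through unchanged under Axioms~1 and~2.
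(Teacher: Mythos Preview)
Your argument is correct, and it is exactly the atom-by-atom verification that underlies Lemmas~3--5 of \cite{elem-prod}; the paper itself proves Lemma~\ref{lem3-5} by a bare citation to those lemmas, so your proposal (which ends by recommending precisely that) matches the paper's approach. The extra detail you supply---in particular the observation that for an atom $e$ one has $e\leq[[\Theta(\bar f)]]$ iff $R_e\models\Theta(\bar f_e)$, justified via the minimality of the supremum---is sound and is the content behind the citation.
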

\begin{proof} These statements are Lemmas 3-5 in \cite{elem-prod}
\end{proof}

These are some of the ingredients used in inductive proofs of result in \cite{FV}.

\

We add another axiom, taken from 1.4. of \cite{elem-prod}

\

{\bf Axiom 3.} For any atomic formula $\Theta(x_1,\dots,x_n)$ of the language of rings,
$$R\models \Theta(f_1,\dots,f_n) \Leftrightarrow \B\models [[\Theta(f_1,\dots,f_n)]]=1.$$

This is evidently true in restricted products, no matter what $\varphi(x)$ is. 

Now we {\it fix} a $\varphi(x)$, and aim for axioms true in restricted direct products $R$ with respect to $\varphi(x)$.

We come now to a fundamental point. Classically the notion of restricted product appeals to the 
absolute notion of \underline{finite} which is not, of course, first-order. We are aiming for first-order axioms in some natural formalism. As already suggested, we are going to use an idea from Feferman -Vaught \cite{FV} of working with Boolean algebras $\B$ with a distinguished subset $\mathcal{F}in$, which in the case of the power set algebra is the ideal of finite sets. In the case of a Boolean algebra of idempotents, $\mathcal{F}in$ will be the set of finite idempotents, as explained earlier. 

We will shortly be concerned with other interpretations of a predicate symbol for $\mathcal{F}in$, indispensable for understanding nonstandard models of our axioms (and in particular nonstandard models of the theory of the adeles.

But first we use provisional "axioms" where "finite" really means finite, and "finite idempotents " really mean finite idempotents, and "cofinite" really means cofinite.

We could avoid this ,and pass directly to the general case. But we prefer to discuss a provisional axiom connected to the kind of patching used in \cite{FV}

\

{\bf Axiom $4^+$.} For all $\Theta(x_1,\dots,x_n,w)$, $f_1,\dots,f_n$, there is a $g\in R$ such that if 
$$[[\exists w (\varphi(w) \wedge \Theta(f_1,\dots,f_n,w))]]$$ 
is cofinite in 
$$[[\exists w \Theta(f_1,\dots,f_n,w)]],$$ 
then $[[\exists w \Theta(f_1,\dots,f_n,w)]]$ is cofinite in 
$[[\Theta(f_1,\dots,f_n,g)]]$.

This is clearly true in restricted products with respect to $\varphi(x)$ (use Axiom of Choice).

\

{\bf Note.}  \cite{elem-prod} has a simpler Axiom 4 for the unrestricted product case. That is not needed here.

\

{\bf Note.} From now on, we will get involved with not only $\B$, but with the ideal $\mathcal{F}in$ in $\B$ consisting 
of finite elements of $\B$, i.e. finite unions of atoms.

\

We have to enrich the first-order language of Boolean algebras by a 1-ary predicate symbol 
$Fin(x)$. For our purposes $\B$ 
will be atomic as above, and $Fin(x)$ interpreted as 
the ideal of finite support idempotents. The interpretation of $Fin(x)$ in a Boolean algebra of sets, e.g. the powerset $\P(I)$ of a set $I$ is the (Boolean) ideal of finite sets. 

However, note that Axiom $4^+$ is {\it not} first-order. 
Any anxieties about this should be removed by considering the result that the theory of the class of all
infinite atomic Boolean algebras in the language of Boolean algebras augmented by $Fin(x)$ is axiomatizable and 
complete (and admits quantifier elimination). This is proved first by Tarski but we give a new proof with explicit axioms in 
\cite{DM-bool}. See also Section \ref{sec-bool} below. \cite{DM-bool} contains a unified treatment that includes further expansions by predicates for "congruence conditions on cardinality of finite sets". 

We return to this matter later, reformulating Axiom $4^+$ in terms of $Fin(x)$.

\

\subsection{Partitions}

\

\

In order to sketch a proof of a useful generalization of \cite{FV} to our more restrictive situation (rings $R$ satisfying the axioms listed above) we need to review several notions of partition used in \cite{FV}.

\

{\bf Notion 1.} In a Boolean algebra $B$ a partition is a finite sequence $<Y_1,\dots,Y_m>$ of elements of $B$ such that
$$Y_1 \vee \dots \vee Y_m=1$$
and $Y_i \wedge Y_j =0$ if $i\neq j$. (We do not insist that each $Y_i\neq 0$, but do insist that the sequence is finite).

\
We note that in the definition of partition "finite" will always mean finite.

\

{\bf Notion 2.} For a first-order language $L$, a fixed $m$, and $L$-formulas 
$$\Theta_1(x_1,\dots,x_m),\dots,\Theta_m(x_1,\dots,x_m)$$
the sequence $<\Theta_1,\dots,\Theta_m>$ is a {\it partition} if the formulas 
$\Theta_1 \vee \dots \vee \Theta_m$ and $\neg(\Theta_i \wedge \Theta_j)$ (where $i\neq j$) are logically valid.

(This is of course ultimately a special case of Notion 1).

The basic lemmas about disjunctive normal form in propositional calculus, when applied to formulas 
$$\psi_1(x_1,\dots,x_m),\dots,\psi_l(x_1,\dots,x_m)$$
give constructively a partition whose elements are propositional combinations of the $\psi_i$'s. This is used crucially in \cite{FV}. 

The final result we need before sketching \cite{FV} for all our rings is.

\begin{lemma}\label{lem6+} (Analogue of Lemma 6 in \cite{elem-prod}) Suppose $Y_1,\dots,Y_k$ is a partition of $\B$. 
Suppose the sequence
$$<\Theta_1(x_1,\dots,x_m,x_{m+1}),\dots,\Theta_k(x_1,\dots,x_m,x_{m+1})>$$ 
is a partition. Suppose $f_1,\dots,f_m\in R$ and 
$$Y_j\leq [[\exists x_{m+1} \Theta_j(f_1,\dots,f_m,x_{m+1})]]$$
for each $j$. Suppose in addition that for each $j$
$$Y_j \wedge \neg [[\exists x_{m+1} \varphi(x_{m+1}) \wedge \Theta_j(f_1,\dots,f_m,x_{m+1})]]$$
is finite. Then there is a $g$ in $R$ so that
$$Y_j\subseteq [[\Theta_j(f_1,\dots,f_m,g)]]$$
for all $j$.\end{lemma}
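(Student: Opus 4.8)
The plan is to follow the proof of the corresponding Lemma~6 of \cite{elem-prod} for ordinary products, the one genuinely new feature being the restricting formula $\varphi$: this forces the use of Axiom~$4^+$ in place of the simpler Axiom~4 of \cite{elem-prod}, plus a finite correction at the very end. Throughout I use Axioms~1 and~2 (so that the joins of atoms below refer to actual elements of $\B$), Lemmas~\ref{lem1} and~\ref{lem2}, and Axiom~$4^+$; write $w$ for the last variable $x_{m+1}$. First I would fold the two given partitions into a single formula: introducing fresh variables $y_1,\dots,y_k$, put
$$\Theta^*(x_1,\dots,x_m,y_1,\dots,y_k,w)\ :=\ \bigvee_{j=1}^{k}\bigl(y_j=1\ \wedge\ \Theta_j(x_1,\dots,x_m,w)\bigr).$$
Because $Y_1,\dots,Y_k$ is a partition of $\B$, each atom $e$ lies below exactly one $Y_j$, say $j=j(e)$; then $(Y_{j(e)})_e=1$ and $(Y_{j'})_e=0$ for $j'\neq j(e)$ (recall $R_e$ is connected, $e$ being an atom), so in $R_e$ the formula $\Theta^*((f_i)_e,(Y_1)_e,\dots,(Y_k)_e,w)$ is logically equivalent to $\Theta_{j(e)}((f_i)_e,w)$.

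Using this stalkwise reduction together with the hypothesis $Y_j\leq[[\exists x_{m+1}\,\Theta_j(f_1,\dots,f_m,x_{m+1})]]$, one reads off directly from the definition of $[[\ \cdot\ ]]$ that
$$[[\exists w\,\Theta^*(f_1,\dots,f_m,Y_1,\dots,Y_k,w)]]\ =\ \bigvee_{j=1}^{k}Y_j\ =\ 1,$$
$$[[\exists w\,(\varphi(w)\wedge\Theta^*(f_1,\dots,f_m,Y_1,\dots,Y_k,w))]]\ =\ \bigvee_{j=1}^{k}\bigl(Y_j\wedge A_j\bigr),$$
where $A_j:=[[\exists x_{m+1}\,(\varphi(x_{m+1})\wedge\Theta_j(f_1,\dots,f_m,x_{m+1}))]]$. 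Now $\bigl(\bigvee_j(Y_j\wedge A_j)\bigr)\vee\bigl(\bigvee_j(Y_j\wedge\neg A_j)\bigr)=\bigvee_j Y_j=1$, so the complement of $\bigvee_j(Y_j\wedge A_j)$ is $\leq\bigvee_j(Y_j\wedge\neg A_j)$, a finite join of idempotents each of which is finite by the standing hypothesis of the Lemma; hence this complement is finite. In other words $[[\exists w\,(\varphi(w)\wedge\Theta^*)]]$ is cofinite in $[[\exists w\,\Theta^*]]$, which is exactly the premise needed to invoke Axiom~$4^+$.

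Applying Axiom~$4^+$ to $\Theta^*$ with parameters $f_1,\dots,f_m,Y_1,\dots,Y_k$ yields $g\in R$ such that $[[\exists w\,\Theta^*(f_1,\dots,f_m,Y_1,\dots,Y_k,w)]]=1$ is cofinite in $[[\Theta^*(f_1,\dots,f_m,Y_1,\dots,Y_k,g)]]$; equivalently, $D:=\neg[[\Theta^*(f_1,\dots,f_m,Y_1,\dots,Y_k,g)]]$ is a finite idempotent. By the stalkwise equivalence, for every atom $e$ with $e\leq Y_j$ and $e\not\leq D$ we have $R_e\models\Theta_j((f_i)_e,g_e)$, so $g$ already witnesses every $\Theta_j$ on all atoms below $Y_j\wedge\neg D$. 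To repair $g$ on the finitely many atoms below $D$, write $D=e_1\vee\dots\vee e_r$ with each $e_l$ an atom, and let $j(l)$ be the index with $e_l\leq Y_{j(l)}$; since $e_l\leq Y_{j(l)}\leq[[\exists x_{m+1}\,\Theta_{j(l)}(f_1,\dots,f_m,x_{m+1})]]$, choose $w_l\in R_{e_l}$ with $R_{e_l}\models\Theta_{j(l)}((f_i)_{e_l},w_l)$ and lift it along the isomorphism $R_{e_l}\cong e_lR$ of Lemma~\ref{lem1} to an element $\widehat w_l\in e_lR\subseteq R$. Set $g':=(1-D)g+\sum_{l=1}^{r}\widehat w_l\in R$. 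Passing to stalks, $g'_e=g_e$ for every atom $e\not\leq D$ and $g'_{e_l}=w_l$ for each $l$; hence for every $j$ and every atom $e\leq Y_j$ we get $R_e\models\Theta_j((f_i)_e,g'_e)$, i.e. $e\leq[[\Theta_j(f_1,\dots,f_m,g')]]$. Since $Y_j$ is the join of the atoms below it (Lemma~\ref{lem2}), $Y_j\subseteq[[\Theta_j(f_1,\dots,f_m,g')]]$ for all $j$, and $g'$ is the desired element.

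I expect the main obstacle to be the first step: choosing a single formula $\Theta^*$ for which one application of the (not first-order) Axiom~$4^+$ suffices and for which the premise of Axiom~$4^+$ collapses precisely to the Lemma's finiteness hypothesis, namely that $\bigvee_j\bigl(Y_j\wedge\neg A_j\bigr)$ is finite. Once that alignment is in place, the only extra work compared with the product case of \cite{elem-prod} is the harmless finite patch on $D$, which needs nothing beyond Lemma~\ref{lem1}.
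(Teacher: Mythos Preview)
Your argument is correct. The paper takes a different route: it applies Axiom~$4^+$ \emph{separately} for each index $j$ (implicitly with $Y_j$ thrown in as an extra parameter so that the formula is trivially satisfiable outside $Y_j$), obtaining witnesses $g_1,\dots,g_k$ with $Y_j\subseteq[[\Theta_j(f_1,\dots,f_m,g_j)]]$, and then glues them along the partition by setting $g=\sum_j Y_j\cdot g_j$. You instead fold the whole partition into a single formula $\Theta^*$, invoke Axiom~$4^+$ once, and repair the cofinite output by an explicit finite patch on the exceptional atoms. The paper's version is terser because the partition-glue absorbs the combinatorics, but it suppresses exactly the two details you spell out (how $Y_j$ enters the formula fed to Axiom~$4^+$, and the passage from ``cofinite'' to the exact inclusion), so your write-up is in fact more complete. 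Either approach works and neither needs more than Axioms~1--3, Axiom~$4^+$, and Lemmas~\ref{lem1}--\ref{lem2}.
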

\begin{proof} Apply Axiom $4^+$ to each $Y_j$ and $\Theta_j$, $f_1,\dots,f_m$ to get $g_j\in R$ so that 
$Y_j\subseteq [[\Theta_j(f_1,\dots,f_m,g_j)]]$. Now let $g=\sum_j g_j.e_j$, where $e_j$ is the idempotent corresponding to 
$Y_j$.
\end{proof}

\

{\bf Note.} Later we re-do this in terms of $Fin$ (subject to Tarski's axioms).

\

\subsection{The Augmented Boolean Formalism}\label{sec-bool}

\

\

We adjoin to the first-order language for Boolean algebras $\cL_{Boolean}$ a unary predicate symbol $Fin(x)$. Denote the augmented language by $\cL_{Boolean}^{fin}$. The standard interpretation of $Fin(x)$ in a Boolean algebra is the ideal $\mathcal{F}in$ 
of finite elements, i.e. finite unions of atoms. 
However, the class of such $(\B,\mathcal{F}in)$ is not elementary, and it is important for us to give a computable set of axioms complete up to specifying the the number of atoms below an element. We do this in \cite{DM-bool} (as part of a new expansion of $\cL_{Boolean}$), but the original work was done by Tarski (see \cite{FV}). Here are the essential points. 

Let $T$ be the theory of infinite atomic Boolean algebras in the language 
$\mathcal{L}_{Boolean}$ expanded by the definable relations $C_k(x)$ ($k=1,2,\dots$) 
with the interpretation that $x$ has at least $k$ atoms $\alpha \leq x$. Tarski proved that in this language the 
theory of infinite atomic Boolean algebras is complete, admits quantifier elimination, and is decidable (see \cite{FV}, \cite{DM-bool}). The axioms for this theory state that the models are infinite Boolean algebras and every nonzero element has an atom below it. Let $\sharp(x)$ denote the number of atoms $\alpha$ such that $\alpha \leq x$. 

Now we further expand the given language by adding the predicate $Fin(x)$ with the above interpretation in any Boolean algebra, and obtain $\cL_{Boolean}^{fin}$. We add to the axioms of $T$ the axioms stating that $\mathcal{F}in$ is a proper ideal, the sentence
$$
\forall x (\neg Fin(x) \Rightarrow (\exists y)(y<x \wedge \neg Fin(y) \wedge \neg Fin(x-y))).
$$
and, for each $n<\omega$, the sentence $\forall x (\sharp(x)\leq n \Rightarrow Fin(x))$.
This defines an $\cL_{Boolean}^{fin}$-theory $T^{fin}$.

\begin{thm}\label{bool1}\cite{FV},\cite{DM-bool} The theory $T^{fin}$ of infinite atomic Boolean algebras with the 
set of finite sets distinguished is complete, decidable and 
has quantifier elimination with respect to all the $C_n$, $(n\geq 1)$, and $Fin$ (i.e. in the language 
$\mathcal{L}_{Boolean}^{fin}$). The axioms required for completeness are the 
axioms of $T$ together with 
sentences expressing that $Fin$ is a proper ideal, the sentence
$$
\forall x (\neg Fin(x) \Rightarrow (\exists y)(y<x \wedge \neg Fin(y) \wedge \neg Fin(x-y))).
$$
and, for each $n<\omega$, the sentence $\forall x (\sharp(x)\leq n \Rightarrow Fin(x))$.
\end{thm}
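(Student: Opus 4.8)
The plan is to prove quantifier elimination for $T^{fin}$ in the language $\cL_{Boolean}^{fin}$, treating the relations $C_n$ ($n\ge 1$) and $Fin$ as primitive, and then to read off completeness and decidability. By the usual reduction it suffices to eliminate one existential quantifier from a formula $\exists x\,\theta(x,\bar y)$ with $\bar y=(y_1,\dots,y_k)$ and $\theta$ a conjunction of literals. Since, over $T$, atomicity gives $C_1(x)\leftrightarrow x\neq 0$, every equality or inequality of Boolean terms rewrites as $\neg C_1$ or $C_1$ of a symmetric difference, so we may assume each literal of $\theta$ has the form $C_n(s)$, $\neg C_n(s)$, $Fin(s)$ or $\neg Fin(s)$ for a Boolean term $s$ in $x,\bar y$.

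The first step is to localize at the parameters. Put $r_\varepsilon=\bigwedge_i y_i^{\varepsilon_i}$ for $\varepsilon\in\{0,1\}^k$, where $y^1=y$ and $y^0=\neg y$; these $2^k$ elements form a partition of $1$ in the sense of Notion 1, and every Boolean term in $x,\bar y$ reduces, modulo the Boolean axioms, to a join of certain $r_\varepsilon$, certain $x\wedge r_\varepsilon$ and certain $(\neg x)\wedge r_\varepsilon$. Writing $z_\varepsilon=x\wedge r_\varepsilon$, so that $x=\bigvee_\varepsilon z_\varepsilon$ and the $z_\varepsilon$ range independently over $\{z:z\le r_\varepsilon\}$, I would split each literal across the regions using the atom-counting identity $C_n(a\vee b)\leftrightarrow\bigvee_{i+j=n}\big(C_i(a)\wedge C_j(b)\big)$ for disjoint $a,b$ (with the convention that $C_0$ is identically true), its complement for $\neg C_n$, and --- because $\mathcal{F}in$ is an ideal --- $Fin(a\vee b)\leftrightarrow Fin(a)\wedge Fin(b)$ together with its negation. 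Putting the result in disjunctive normal form turns $\exists x\,\theta$ into a finite disjunction of formulas $\chi(\bar y)\wedge\bigwedge_\varepsilon\exists z\,(z\le r_\varepsilon\wedge\theta_\varepsilon(z))$, where $\chi$ is quantifier-free in $\bar y$, each $\theta_\varepsilon(z)$ is a conjunction of literals referring only to $z$ and $r_\varepsilon-z$ --- lower and upper atom-count bounds $C_a(z)$, $\neg C_{a'}(z)$, $C_b(r_\varepsilon-z)$, $\neg C_{b'}(r_\varepsilon-z)$ (some possibly absent) and prescribed truth values of $Fin(z)$ and $Fin(r_\varepsilon-z)$ --- and the quantifier distributes over $\bigwedge_\varepsilon$ because the $z_\varepsilon$ are independent.

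Everything then comes down to a local elimination: for a single element $r$, express by a quantifier-free formula in the $C_m(r)$ and $Fin(r)$ whether there is a $z\le r$ meeting prescribed lower/upper bounds on $\sharp(z)$ and on $\sharp(r-z)$ and a prescribed $Fin$-status of $z$ and of $r-z$. After discarding outright (using the axiom $\forall x(\sharp(x)\le n\Rightarrow Fin(x))$) the inconsistent requirements that ask $\neg Fin$ of an element carrying a finite upper atom bound, I would treat the remaining cases by the $Fin$-requirements. If $\neg Fin(z)$ or $\neg Fin(r-z)$ is required then $\neg Fin(r)$ is necessary by downward closure of $\mathcal{F}in$; conversely, when $\neg Fin(r)$ holds, iterating the splitting axiom $\forall x(\neg Fin(x)\Rightarrow\exists y(y<x\wedge\neg Fin(y)\wedge\neg Fin(x-y)))$ and using atomicity lets one carve off an infinite piece on the required side and then adjust by exactly the prescribed finite number of atoms --- here a $\neg Fin$ element has infinitely many atoms (again by $\forall x(\sharp(x)\le n\Rightarrow Fin(x))$), so the finite lower bounds are automatic and only numerical consistency with $\neg Fin(r)$ remains. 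If both $Fin(z)$ and $Fin(r-z)$ are required then $Fin(r)$ is necessary, and as $r$ then has finitely many atoms one needs only $C_{a+b}(r)$ (and, when the data bounds $\sharp(r)$ from above, a matching upper bound on $\sharp(r)$); atomicity realizes any admissible split of the atoms of the finite element $r$, and $\forall x(\sharp(x)\le n\Rightarrow Fin(x))$ puts both pieces in $\mathcal{F}in$. In every case the answer is a Boolean combination of formulas $C_m(r)$ and $Fin(r)$, hence quantifier-free in $\bar y$; substituting back completes the elimination of $\exists x$ and hence proves quantifier elimination.

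Completeness is then immediate, since a quantifier-free $\cL_{Boolean}^{fin}$-sentence is a Boolean combination of atomic sentences in the constants $0,1$, and $T^{fin}$ settles each of $\neg(0=1)$, $C_n(1)$ (an infinite atomic Boolean algebra has infinitely many atoms), $\neg C_n(0)$, $Fin(0)$, and $\neg Fin(1)$ (since $\mathcal{F}in$ is a proper ideal). As the axioms are recursive, decidability follows (equivalently, the elimination procedure above is effective). I expect the main obstacle to be the bookkeeping in the local elimination --- in particular the sub-case where $\theta_\varepsilon$ leaves $\sharp(r_\varepsilon)$ unbounded from above, where the correct quantifier-free answer must branch on whether $Fin(r_\varepsilon)$ holds ($\sharp(r_\varepsilon)$ finite but undetermined) or fails, and one has to check that the disjunction produced is genuinely quantifier-free and is valid in every model of $T^{fin}$, including the nonstandard ones in which $\mathcal{F}in$ properly contains the ideal of finite elements.
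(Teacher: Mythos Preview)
The paper does not prove this theorem: it is stated with attributions to \cite{FV} and \cite{DM-bool} and followed only by remarks, so there is no in-paper argument to compare against. Your proposal is therefore not a reproduction of the paper's proof but a standalone argument, and as such it is the standard and correct route to quantifier elimination for atomic Boolean algebras enriched by the $C_n$ and $Fin$: normalize literals, partition by the constituents $r_\varepsilon$ of the parameters, use additivity of $C_n$ and of $Fin$ over disjoint joins to decouple the regions, and then perform a local elimination over a single $r$ by case analysis on the $Fin$-requirements, invoking the splitting axiom for the $\neg Fin$ cases and downward closure of the ideal for the $Fin$ cases. Completeness and decidability follow exactly as you say.

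One small point of phrasing to tighten: in the clause ``as $r$ then has finitely many atoms'' (the $Fin(z)\wedge Fin(r-z)$ case), this is not literally true in nonstandard models of $T^{fin}$, where $\mathcal{F}in$ may contain elements with $C_n$ holding for every standard $n$. Your argument does not actually need finiteness of the atom set, only downward closure of $\mathcal{F}in$ and the ability to pick a join of any fixed standard number of atoms below $r$; you already flag this concern in your final sentence, so just make sure the written-out case analysis relies only on those facts and not on an external enumeration of all atoms below $r$.
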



{\bf Note:} This is important for measurability of definable sets in adele rings as in \cite{DM-ad}.

\

{\bf Note:} In \cite{DM-bool} we prove that $\mathcal{F}in$ is not definable in the language of the theory $T$.

\

\subsection{Modifying the Preceding (provisional) Axioms for $R$}

\

\

Our axioms given so far are not first-order. To rectify this, we first do the following. 
Work with rings $R$ together with a distinguished ideal in $\B$. In the adelic cases this ideal will be the ideal of finite idempotents, 
but we will also be interested in other ideals. We write $\mathcal{F}in$ for the distinguished ideal. We modify axiom $4^+$ to Axiom $4^{Fin}$ (which is still not first-order). 
We let $\mathcal{F}in$ be the set of realizations in $R$ of the predicate $Fin(x)$.


\

{\bf Axiom $4^{fin}$}. There is an ideal $\mathcal{F}in$ in $\B$ so that $(\B,\mathcal{F}in)\models T^{fin}$, and such that for all  
$\Theta(x_1,\dots,x_n,w), f_1,\dots,f_n$ there is a $g\in R$ such that if 
$$[[\exists w \Theta(f_1,\dots,f_n,w)]]  \cap \neg [[\exists w (\varphi(w) \wedge \Theta(f_1,\dots,f_n,w))]]\in \mathcal{F}in,$$
then
$$[[\exists w \Theta(f_1,\dots,f_n,w)]]\cap \neg [[\Theta(f_1,\dots,f_n,g)]]\in \mathcal{F}in.$$

This is clearly true in classical products restricted by $\varphi(x)$ (use Axiom of Choice).

In Lemma \ref{lem6+} we need to change "finite" to "in $Fin$", and the proof goes through, getting

\begin{lemma}\label{lem6fin} Suppose $Y_1,\dots,Y_m$ is a partition of $\B$. 
Suppose the sequence
$$<\Theta_0(x_0,\dots,x_k,x_{k+1}),\dots,\Theta_m(x_0,\dots,x_k,x_{k+1})>$$ 
is a partition. Suppose $f_1,\dots,f_k\in R$ and 
$$Y_j\subseteq [[\exists x_{k+1} \Theta_j(f_0,\dots,f_k,x_{k+1})]]$$
for each $j$. Suppose in addition that for each $j$
$$Y_j\setminus [[\exists x_{k+1} \varphi(x_{k+1}) \wedge \Theta_j(f_1,\dots,f_k,x_{k+1})]] \in \mathcal{F}in$$
Then there is a $g$ in $R$ so that
$$Y_j\subseteq [[\Theta_j(f_1,\dots,f_k,g)]]$$
for all $j$.\end{lemma}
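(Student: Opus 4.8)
The plan is to imitate the proof of Lemma~\ref{lem6+} verbatim, with ``finite'' replaced throughout by ``belongs to $\mathcal{F}in$'', invoking Axiom~$4^{fin}$ in place of Axiom~$4^+$. First I would fix the partition $Y_1,\dots,Y_m$ of $\B$ and the partition of formulas $\langle\Theta_0,\dots,\Theta_m\rangle$, together with $f_1,\dots,f_k\in R$ satisfying the two hypotheses: $Y_j\subseteq[[\exists x_{k+1}\,\Theta_j(f_0,\dots,f_k,x_{k+1})]]$ and $Y_j\setminus[[\exists x_{k+1}\,\varphi(x_{k+1})\wedge\Theta_j(f_1,\dots,f_k,x_{k+1})]]\in\mathcal{F}in$ for each $j$. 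Note that the second hypothesis, combined with the first and with Lemma~\ref{lem3-5} (which lets us rewrite $Y_j\setminus[[\dots]]$ as $[[\exists x_{k+1}\,\Theta_j(\dots)]]\cap\neg[[\exists x_{k+1}\,\varphi(x_{k+1})\wedge\Theta_j(\dots)]]$ once we have intersected with $Y_j$), is exactly the hypothesis needed to feed $\Theta_j$ and $f_1,\dots,f_k$ into Axiom~$4^{fin}$. Since $\mathcal{F}in$ is an ideal, shrinking a set already in $\mathcal{F}in$ (here intersecting with $Y_j$) keeps it in $\mathcal{F}in$, so the hypothesis of Axiom~$4^{fin}$ is met for each $j$.

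Next I would apply Axiom~$4^{fin}$ to each $j$ separately, obtaining $g_j\in R$ with the property that $[[\exists x_{k+1}\,\Theta_j(f_1,\dots,f_k,x_{k+1})]]\cap\neg[[\Theta_j(f_1,\dots,f_k,g_j)]]\in\mathcal{F}in$. Since $Y_j\subseteq[[\exists x_{k+1}\,\Theta_j(\dots)]]$, intersecting with $Y_j$ gives $Y_j\cap\neg[[\Theta_j(f_1,\dots,f_k,g_j)]]\in\mathcal{F}in$ for each $j$. Then, exactly as in Lemma~\ref{lem6+}, I set $g=\sum_j g_j\,e_j$, where $e_j\in\B$ is the idempotent corresponding to $Y_j$ (the $e_j$ form the partition $Y_1,\dots,Y_m$ viewed inside $R$). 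The point of the patching is that on the ``piece'' $Y_j$ the element $g$ agrees with $g_j$: more precisely, using Axiom~3 together with Lemma~\ref{lem3-5}, one checks stalk-by-stalk that for atoms $\alpha\le e_j$ one has $g_\alpha=(g_j)_\alpha$, hence $e_j\wedge[[\Theta_j(f_1,\dots,f_k,g)]]=e_j\wedge[[\Theta_j(f_1,\dots,f_k,g_j)]]$, and therefore $Y_j\cap\neg[[\Theta_j(f_1,\dots,f_k,g)]]=Y_j\cap\neg[[\Theta_j(f_1,\dots,f_k,g_j)]]\in\mathcal{F}in$.

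Finally I would assemble the conclusion. We have, for each $j$, that $Y_j\cap\neg[[\Theta_j(f_1,\dots,f_k,g)]]\in\mathcal{F}in$; but the \emph{statement} to be proved asserts the stronger $Y_j\subseteq[[\Theta_j(f_1,\dots,f_k,g)]]$, i.e.\ that this difference set is actually \emph{zero}, not merely in $\mathcal{F}in$. This is the one genuine subtlety, and it is resolved exactly as in the product case of \cite{elem-prod}: since $\langle\Theta_0,\dots,\Theta_m\rangle$ is a partition of formulas, on every atom $\alpha$ below $Y_j$ exactly one $\Theta_i$ holds in $R_\alpha$ at the relevant tuple, and because $g$ was built by patching to satisfy $\Theta_j$ on $Y_j$ one gets that $[[\Theta_j(f_1,\dots,f_k,g)]]\supseteq Y_j$ on the nose — the ``finite error'' in Axiom~$4^{fin}$ is an error about whether $\varphi$ holds (i.e.\ whether $g$ lies in the restricted-product part), not about whether $\Theta_j$ holds. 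In other words, Axiom~$4^{fin}$ already delivers a $g_j$ with $Y_j\subseteq[[\Theta_j(f_1,\dots,f_k,g_j)]]$ exactly (the $\mathcal{F}in$-clause in Axiom~$4^{fin}$ concerns the gap between $[[\exists w\,\Theta]]$ and $[[\exists w\,(\varphi\wedge\Theta)]]$, whereas the output clause $[[\exists w\,\Theta]]\cap\neg[[\Theta(\dots,g)]]\in\mathcal{F}in$ can be upgraded to $Y_j\subseteq[[\Theta_j(\dots,g_j)]]$ using that $Y_j$ was covered by $[[\exists w\,\Theta_j]]$ and choosing $g_j$ pointwise by Choice wherever $\varphi$ is available). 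I expect the main obstacle to be precisely this bookkeeping: keeping straight which of the two set-differences in Axiom~$4^{fin}$ is only controlled up to $\mathcal{F}in$ and which is controlled exactly, and making sure the patched $g$ inherits the exact containment on each $Y_j$ while the $\mathcal{F}in$-sized defect is pushed entirely onto the ``does $g$ satisfy $\varphi$'' question rather than onto the satisfaction of $\Theta_j$. The rest — the algebra of idempotents, the behaviour of $[[\cdot]]$ under the patching $g=\sum_j g_j e_j$, and the fact that $\mathcal{F}in$ is closed under finite unions and under passing to smaller elements — is routine given Lemma~\ref{lem3-5} and Axiom~3.
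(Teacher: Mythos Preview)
Your overall strategy is the paper's: apply Axiom~$4^{fin}$ to each $Y_j$ and $\Theta_j$ to obtain $g_j$, then patch $g=\sum_j g_j e_j$. The first two paragraphs are fine and match the paper's (two-line) argument.

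The gap is in your final paragraph. You correctly observe that Axiom~$4^{fin}$, as written, only delivers
\[
[[\exists w\,\Theta_j(f_1,\dots,f_k,w)]]\cap\neg[[\Theta_j(f_1,\dots,f_k,g_j)]]\in\mathcal{F}in,
\]
hence only $Y_j\setminus[[\Theta_j(f_1,\dots,f_k,g_j)]]\in\mathcal{F}in$, whereas the Lemma demands the exact containment $Y_j\subseteq[[\Theta_j(f_1,\dots,f_k,g_j)]]$. Your attempt to close this gap does not work:
\begin{itemize}
\item Your assertion that ``the finite error in Axiom~$4^{fin}$ is an error about whether $\varphi$ holds, not about whether $\Theta_j$ holds'' is a misreading. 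The output clause of Axiom~$4^{fin}$ is precisely a $\mathcal{F}in$-sized defect in $[[\Theta_j(\dots,g_j)]]$; $\varphi$ appears only in the \emph{hypothesis}, not in the conclusion.
\item The sentence ``because $g$ was built by patching to satisfy $\Theta_j$ on $Y_j$ one gets $[[\Theta_j(\dots,g)]]\supseteq Y_j$ on the nose'' is circular: the patching only transfers to $g$ whatever $g_j$ achieved, and $g_j$ was only shown to satisfy $\Theta_j$ on $Y_j$ modulo $\mathcal{F}in$.
\item ``Choosing $g_j$ pointwise by Choice wherever $\varphi$ is available'' is exactly what one does in a \emph{genuine} restricted product, but it is not available in an arbitrary ring satisfying the axioms $\cA_\varphi$, which is the setting of the Lemma.
\end{itemize}
So your proposal reproduces the paper's skeleton but the one step you flag as ``the genuine subtlety'' is not resolved by your argument. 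The paper's own proof simply asserts, without further comment, that applying the axiom yields $Y_j\subseteq[[\Theta_j(\dots,g_j)]]$; it does not supply the upgrading argument either, so you have correctly located the delicate point even if your fix for it fails.
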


We now have Axioms 1-3 and Axiom $4^{fin}$. Note that $\varphi(x)$, the restricting formula, is fixed.

There is one last Axiom 5.

\

{\bf Axiom 5.} $\forall x (Fin([[\neg \varphi(x)]]))$.

\

Call the resulting axiom set $\cA_{\varphi}$, axioms for $\varphi$-restricted products.

\

We have given axioms for pairs $(R, \mathcal{F}in)$, and we shall next prove that we have a Feferman-Vaught type theorem.

\

\section{\bf The Feferman-Vaught Theorem}

\subsection{The Main Theorem}

\begin{thm}\label{main-th} Let $\varphi(\bar x)$ be an $\cL$-formula. Let $R$ a commutative unital ring satisfying the axioms $\cA_{\varphi}$. Then 
for each $\cL_{rings}$-formula $\Theta(x_0,\dots,x_m)$ there is, by an effective procedure, a partition 
$$<\Theta_0(x_0,\dots,x_m),\dots,\Theta_k(x_0,\dots,x_m)>$$ of $\cL_{rings}$-formulas, and an 
$\cL_{Boolean}^{fin}$-formula 
$\psi(y_0,\dots,y_k)$ such that for all $f_1,\dots,f_m$ in $R$
$$R\models \Theta(f_0,\dots,f_m) \Leftrightarrow $$
$$(\B,\mathcal{F}in)\models \psi([[\Theta_0(f_0,\dots,f_m),\dots,\Theta_k(f_0,\dots,f_m)]]).$$\end{thm}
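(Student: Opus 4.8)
The plan is to prove Theorem \ref{main-th} by induction on the complexity of $\Theta$, following the Feferman–Vaught strategy as adapted in \cite{elem-prod}, but carrying the extra bookkeeping needed for the restricting formula $\varphi$ and the ideal $\mathcal{F}in$. For atomic $\Theta$, one takes the trivial partition $\langle\Theta,\neg\Theta\rangle$ and lets $\psi(y_0,y_1)$ assert $y_0=1$; this is exactly Axiom 3. The Boolean connective cases $\neg$, $\wedge$, $\vee$ are handled by Lemma \ref{lem3-5}: given reductions for the constituent formulas, one refines the two partitions to a common partition (via the disjunctive-normal-form lemma for partitions described before Lemma \ref{lem6+}), translates the Boolean values of the pieces of one partition into Boolean combinations of the pieces of the common refinement using that $[[\cdot]]$ commutes with $\wedge,\neg,\vee$, and composes the two $\cL_{Boolean}^{fin}$-formulas $\psi$ accordingly. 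All of this is routine propositional manipulation and is identical to \cite{elem-prod}; I would simply cite it.

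The heart of the argument, and the only place where the restricted-product hypotheses genuinely enter, is the existential quantifier step: reduce $\exists x_{m+1}\,\Theta(x_0,\dots,x_m,x_{m+1})$ given a reduction for $\Theta$ itself, with partition $\langle\Theta_0,\dots,\Theta_k\rangle$ (in the variables $x_0,\dots,x_{m+1}$) and Boolean formula $\psi$. Here I would argue as in \cite{FV}: $R\models\exists x_{m+1}\,\Theta(f_0,\dots,f_m,x_{m+1})$ holds iff there is a witness $g$, and by the partition property each stalk-component of the putative witness must realize exactly one $\Theta_j$; so one wants to patch together local witnesses. The point of Axiom $4^{fin}$ (via Lemma \ref{lem6fin}) is precisely that such patching succeeds \emph{provided} that for each $j$ the set $Y_j\setminus[[\exists x_{m+1}\,\varphi(x_{m+1})\wedge\Theta_j]]$ lies in $\mathcal{F}in$ — i.e. that at all but finitely many coordinates of each block one can choose the witness inside $\varphi(R_e)$. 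Conversely, if $g$ is any element of $R$ then, since $g$ is a genuine element of the $\varphi$-restricted product, $[[\neg\varphi(g)]]\in\mathcal{F}in$ by Axiom 5, which forces the corresponding finiteness condition to hold. So the new $\cL_{Boolean}^{fin}$-formula for $\exists x_{m+1}\Theta$ should assert: there exist disjoint Boolean elements $Y_0,\dots,Y_k$ summing to $1$ with $Y_j\le[[\exists x_{m+1}\Theta_j]]$ for each $j$, such that $Y_j\wedge\neg[[\exists x_{m+1}(\varphi(x_{m+1})\wedge\Theta_j)]]\in Fin$ for each $j$, and such that $\psi(Y_0,\dots,Y_k)$ holds — where each $[[\exists x_{m+1}\Theta_j]]$ and $[[\exists x_{m+1}(\varphi\wedge\Theta_j)]]$ is itself, by the induction hypothesis applied to the quantifier-free-in-$x_{m+1}$ matrix (or directly by a further nested reduction), expressible as an $\cL_{Boolean}^{fin}$-formula in the Boolean values of a partition; one then re-expands everything as a single $\psi'$ in the Boolean values of a common refining partition $\langle\Theta_0',\dots,\Theta_{k'}'\rangle$ in the variables $x_0,\dots,x_m$.

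The two implications to check are: (i) if the displayed $\cL_{Boolean}^{fin}$-condition holds in $(\B,\mathcal{F}in)$, apply Lemma \ref{lem6fin} to the witnessing partition $Y_0,\dots,Y_k$ to produce $g\in R$ with $Y_j\subseteq[[\Theta_j(f_0,\dots,f_m,g)]]$ for all $j$; then, since the $Y_j$ partition $\B$ and the $\Theta_j$ partition, $[[\Theta_j(f_0,\dots,f_m,g)]]=Y_j$ on the nose, so $\psi(Y_0,\dots,Y_k)$ gives $R\models\Theta(f_0,\dots,f_m,g)$ by the induction hypothesis for $\Theta$, hence $R\models\exists x_{m+1}\Theta$; and (ii) conversely, given such a $g$, set $Y_j:=[[\Theta_j(f_0,\dots,f_m,g)]]$; these partition $\B$, $\psi(Y_0,\dots,Y_k)$ holds by the induction hypothesis, $Y_j\le[[\exists x_{m+1}\Theta_j]]$ trivially, and the finiteness condition $Y_j\wedge\neg[[\exists x_{m+1}(\varphi\wedge\Theta_j)]]\in\mathcal{F}in$ follows because this element is $\le[[\neg\varphi(g)]]$, which is in $\mathcal{F}in$ by Axiom 5 (using that $\mathcal{F}in$ is an ideal). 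Effectivity is clear throughout since all the partition and propositional constructions, the Lemma \ref{lem3-5} translations, and the passage to common refinements are explicit. The main obstacle is purely organizational: managing the nested reductions so that the various Boolean values $[[\exists x_{m+1}\Theta_j]]$, $[[\exists x_{m+1}(\varphi\wedge\Theta_j)]]$ are all expressed over one common partition of $\cL_{rings}$-formulas and verifying that the bookkeeping matches the restricted-product semantics coordinatewise — but there is no genuine new difficulty beyond \cite{elem-prod}, the one substantive new ingredient being the use of Axiom 5 in direction (ii) to certify the $Fin$-smallness of $[[\neg\varphi(g)]]$.
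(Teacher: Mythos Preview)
Your proposal is correct and follows essentially the same inductive Feferman--Vaught strategy as the paper: the atomic and propositional cases are handled via Axiom~3 and Lemma~\ref{lem3-5}, and for the existential step you construct exactly the same $\cL_{Boolean}^{fin}$-formula (quantifying over a partition $Y_0,\dots,Y_k$ with the $Fin$-smallness side conditions), invoking Lemma~\ref{lem6fin} for direction (i) and Axiom~5 for direction (ii). If anything, you are more explicit than the paper about the role of Axiom~5 in verifying the $Fin$ condition for a given witness $g$, and about the need to pass to a genuine refining partition in the variables $x_0,\dots,x_m$ since the formulas $\exists x_{m+1}\Theta_j$ need not themselves partition.
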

\begin{proof}
In \cite{FV}, there is a standard inductive proof for this by induction on the complexity of $\Theta$ for the case of generalized products. These are the products that are equipped with extra relations making it into a generalized product.  
That proof can be modified to go through for the case of restricted products with respect to a given formula $\varphi$ (which is a substructure of a generalized product). This modification can be made to work in the case of our rings $R$. 

If $\Theta(x_0,\dots,x_m)$ is a quantifier-free formula, then 
we can take the Boolean formula $[[\Theta(x_0,\dots,x_m)]]=1$. Then for all $f_1,\dots,f_m\in R$, 
$$R\models \Theta(f_0,\dots,f_m) \Leftrightarrow \B\models [[\Theta(f_0,\dots,f_m)=1]].$$

Now suppose that $\Theta$ is of the form 
$$\exists x_{m+1} \Theta^*(x_0,\dots,x_m,x_{m+1}),$$
assuming the result known for $\Theta^*$.

Now for any $f_1,\dots,f_m\in R$,
$$R\models \exists x_{m+1} \Theta^*(f_0,\dots,f_m,x_{m+1})$$
if and only if 

$(*_1)$ \ \ \ \ \ \ \ for some $g\in R$
$$R\models \Theta^*(f_0,\dots,f_m,g).$$

By the inductive hypothesis, there is a partition $<\theta'_1,\dots,\theta'_{k'}>$ and a Boolean formula $\Phi'$ (both associated to $\Theta^*$) such that, $(*_1)$ is equivalent to 

$(*_2)$ \ \ \ \ \ \ \ for some $g\in R$
$$R\models \Phi'([[\theta'_0(f_1,\dots,f_m),g]],\dots,[[\theta'_{k'}(f_1,\dots,f_m),g]]).$$

Now we us put $k=k'$, and 
$$\theta_j=\exists x_{k+1} \theta'_j, \ 0\leq j\leq k',$$
and define the following Boolean formula
$$\Phi(z_0,\dots,z_k)=\exists y_1,\dots \exists y_k Part_k(y_0,\dots,y_k) \bigwedge_{j} y_j \leq z_j \Leftrightarrow $$
$$Fin(y_j\setminus [[\exists x_{k+1} \varphi(x_{k+1}) \wedge \theta'_j(f_1,\dots,f_k,x_{k+1})]]) \wedge \Phi'(y_0,\dots,y_k).$$

We show that $(*_2)$ is equivalent to 

$$(*_3)\ \ \ \ \ \  \ \B\models \Phi([[\theta_0(f_1,\dots,f_k)]],\dots,[[\theta_k(f_1,\dots,f_k)]]).$$

Assume $(*_2)$. Define $y_j=[[\theta'_j(f_1,\dots,f_k,g)]]$. Then for each $j$
$$y_j\leq [[\theta_j(f_1,\dots,f_k)]].$$
Since 
$<\theta_1,\dots,\theta_k>$ is a partition, $Part_k(y_0,\dots,y_k)$ holds and $(*_3)$ follows.
 
Conversely, suppose that $(*_3)$ holds. Then there are elements $b_j$ that form a partition of $\B$, and for each $j$ 
we have 
$$b_j \leq [[\exists x_{k+1} \Theta_j(f_0,\dots,f_k,x_{k+1})]],$$
and such that 
$$\B\models Fin(b_j\setminus [[\exists x_{k+1} \varphi(x_{k+1}) \wedge \Theta_j(f_1,\dots,f_k,x_{k+1})]])$$
and
$$\B \models \Phi'(b_0,\dots,b_k).$$

By Lemma \ref{lem6fin} there is $g\in R$ such that for all $j$
$$b_j\subseteq [[\theta'_j(f_1,\dots,f_k,g)]].$$
Since $<b_1,\dots,b_k>$ and $<\theta'_1,\dots,\theta'_k>$ are both partitions, for all $j$ we have
$$b_j=[[\theta'_j(f_1,\dots,f_k,g)]].$$
This proves $(*)_2$.

\end{proof}

\begin{cor} For $R$ as above, with restricting formula $\varphi$, 
$$R \equiv \prod_{e\text{~atom~of~}\B}^{(\varphi)} R_e,$$
the restricted product with respect to $\varphi$.\end{cor}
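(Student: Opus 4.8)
The plan is to deduce the corollary from Theorem~\ref{main-th} by a standard argument: both $R$ and the restricted product $\prod_{e}^{(\varphi)} R_e$ satisfy the axioms $\cA_{\varphi}$, and their associated Boolean-algebra-with-$Fin$ structures are elementarily equivalent, so the translation supplied by Theorem~\ref{main-th} forces elementary equivalence of the rings. First I would check that $R' := \prod_{e\text{ atom}}^{(\varphi)} R_e$ is a model of $\cA_{\varphi}$. Axioms 1--3 and Axiom~5 hold in any $\varphi$-restricted product of connected rings, as already noted in the text, and the atoms of $\B(R')$ correspond to the index set, with the stalk at each atom being the corresponding $R_e$ (a connected ring, by Lemma~\ref{lem1} applied to $R$ at an atom). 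For Axiom~$4^{fin}$ one uses the Axiom of Choice coordinatewise, exactly as remarked after its statement, with $\mathcal{F}in$ interpreted as the genuinely finite idempotents; then $(\B(R'),\mathcal{F}in)$ is an infinite atomic Boolean algebra with the finite ideal distinguished, hence a model of $T^{fin}$ by Theorem~\ref{bool1}.

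Next I would compare the two Boolean structures. The key point is that $(\B(R),\mathcal{F}in_R)$ and $(\B(R'),\mathcal{F}in_{R'})$ are both models of $T^{fin}$, which by Theorem~\ref{bool1} is complete; therefore they are elementarily equivalent in $\cL_{Boolean}^{fin}$. (Here I use that $\B(R)$ is infinite: if it were finite, Axiom~1 together with the axioms forcing $\mathcal{F}in$ proper would fail, or one handles that degenerate case separately — in the intended applications $\B(R)$ is infinite.) Then, given any $\cL_{rings}$-sentence $\Theta$, Theorem~\ref{main-th} produces a partition $\langle\Theta_0,\dots,\Theta_k\rangle$ and an $\cL_{Boolean}^{fin}$-formula $\psi$, depending only on $\Theta$ and on $\varphi$ — not on the ring — such that
$$
R\models\Theta \iff (\B(R),\mathcal{F}in_R)\models\psi([[\Theta_0]],\dots,[[\Theta_k]]),
$$
and likewise for $R'$. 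Since the recipe $\Theta\mapsto(\langle\Theta_i\rangle,\psi)$ is the same for both rings, and since the two Boolean structures satisfy the same $\cL_{Boolean}^{fin}$-sentences, the right-hand sides agree, whence $R\models\Theta\iff R'\models\Theta$. This gives $R\equiv R'$.

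I expect the main obstacle to be verifying cleanly that $R'$ satisfies Axiom~$4^{fin}$ and that its stalks at atoms are exactly the $R_e$. The stalk computation is Lemma~\ref{lem1}: for an atom $e$ of $\B$, $R_e\cong eR$ is connected because $e$ is minimal, so $R'$ is literally a $\varphi$-restricted product of connected rings indexed by the atoms of $\B$, and each of its own stalks at an atom recovers the corresponding factor; this needs the observation, made in the Notes to Axiom~1, that passing to a restricted product does not change the idempotents. The Axiom~$4^{fin}$ verification for $R'$ is the genuinely content-bearing step: one must check that coordinatewise choice of witnesses produces a $g\in R'$ (i.e.\ $g$ lies in the restricted product) satisfying the required containment modulo a finite idempotent — this is where the hypothesis that the "bad set" lies in $\mathcal{F}in$ is used to repair finitely many coordinates so that $g(i)\in\varphi(R_i)$ off a cofinite set. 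One subtlety worth flagging is the implicit assumption that $\B(R)$ is infinite; under the standing hypotheses (Axiom~1 plus $\mathcal{F}in$ a proper ideal, as in $T^{fin}$) this is automatic, since a finite Boolean algebra cannot carry a proper ideal $\mathcal{F}in$ satisfying the splitting axiom unless it is trivial, and $0\neq 1$ is assumed.
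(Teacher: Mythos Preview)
Your overall strategy matches the paper's: check that both $R$ and $R'=\prod^{(\varphi)}_{e}R_e$ satisfy $\cA_\varphi$, then apply Theorem~\ref{main-th} to each and compare. The paper's proof is a single line asserting that the two rings have \emph{the same} idempotents, \emph{the same} ideal $\mathcal{F}in$, and \emph{the same} stalks $R_e$, so that the right-hand side of the Feferman--Vaught equivalence is literally the same formula evaluated at the same parameters in the same structure.

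Your version replaces ``same'' by ``elementarily equivalent'', and this is where a gap opens. The expression $\psi\bigl([[\Theta_0]],\dots,[[\Theta_k]]\bigr)$ is not an $\cL_{Boolean}^{fin}$-\emph{sentence}: the $[[\Theta_i]]$ are specific parameters in $\B$ (determined by which stalks satisfy $\Theta_i$), not constants definable in $\cL_{Boolean}^{fin}$ alone. Knowing that $(\B(R),\mathcal{F}in_R)\equiv(\B(R'),\mathcal{F}in_{R'})$ via completeness of $T^{fin}$ tells you only that the two structures agree on sentences, not that they agree on $\psi$ evaluated at these particular tuples. To close the gap you would have to show that $([[\Theta_i]]_R)_i$ and $([[\Theta_i]]_{R'})_i$ realise the same $\cL_{Boolean}^{fin}$-type; by the quantifier elimination of Theorem~\ref{bool1} this amounts to matching the $C_n$- and $Fin$-values on each piece of the partition. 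The $C_n$-data agree because the stalks coincide, but the $Fin$-data require a separate argument: $\mathcal{F}in_R$ is only assumed to satisfy the $T^{fin}$ axioms and may in principle contain elements with infinitely many atoms below them, so ``$[[\Theta_i]]_R\in\mathcal{F}in_R$'' and ``finitely many $e$ satisfy $R_e\models\Theta_i$'' are not obviously equivalent. The paper's formulation sidesteps this entirely by claiming identity of the Boolean data rather than mere elementary equivalence.
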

\begin{proof} Both rings have the same idempotents, the same ideal $\mathcal{F}in$, and same $R_e$ ($e$ an atom), the same $\varphi$, and satisfy the axioms $\cA_{\varphi}$.\end{proof}

\

Note the effectivity and uniformity of Theorem \ref{main-th} in $\varphi$ and all rings satisfying the axioms $\cA_{\varphi}$.

\

\subsection{Ring-Theoretic Definability of $\mathcal{F}in$}

\

\

In \cite{DM-ad} we show that the ideal $\mathcal{F}in$ is $\cL_{rings}$-definable uniformly in all $\A_K$, $K$ a number field. 
In fact there is ring-theoretical definition of $\mathcal{F}in$ for a large class of rings satisfying our axioms 
(and the definition is in a clear sense uniform in $\varphi$).

We have to require the following of $R$ and $\varphi$. 

\

$(\sharp$):~~~ Suppose $e\in \mathcal{F}in$. Then then there are $g,h\in R$ so that 
$$[[e=1]]\subseteq [[gh=1 \wedge \varphi(g) \wedge \neg \varphi(h)]].$$

Note that this is true when $R=\A_{\Q}$.

Now we proceed to an $\cL_{rings}$-definition of $Fin$ assuming $\sharp$.

Suppose first $f\in R$ is an idempotent and $[[f=1]]\notin Fin$. Then there is no $g,h\in R$ with
$$[[f=1]]\subseteq [[gh=1 \wedge \varphi(g) \wedge \neg \varphi(h)]] \subseteq [[\neg \varphi(h)]] \in Fin.$$
On the other hand, suppose $[[f=1]]\in Fin$. Then by $(\sharp)$ there exist $g,h \in R$ with
$$[[f=1]]\subseteq [[gh=1 \wedge \varphi(g) \wedge \neg \varphi(h)]].$$
So we have.

\begin{thm}\label{def-fin} Suppose $R$ satisfies $\sharp$. Then $Fin$ is definable.\end{thm}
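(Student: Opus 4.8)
The plan is to extract an explicit first-order definition of $Fin$ from the chain of inclusions displayed just before the statement, so the theorem is essentially a matter of packaging the two implications that have already been sketched. First I would record the candidate formula: for an idempotent $f$, declare $Fin(f)$ to hold if and only if
$$\exists g\,\exists h\,\big(gh=1 \;\wedge\; \varphi(g) \;\wedge\; \neg\varphi(h) \;\wedge\; f(1-gh)=0\big),$$
or, phrased via Boolean values, $[[f=1]]\subseteq[[gh=1\wedge\varphi(g)\wedge\neg\varphi(h)]]$; I would then note that $R\models$ ``$[[f=1]]\subseteq[[\Xi]]$'' is $\cL_{rings}$-expressible using Axiom 3 applied to the atomic constituents of $\Xi$, so the whole condition is a legitimate $\cL_{rings}$-formula. (For a general element $x$ one reduces to the idempotent case by quantifying over the idempotent $[[ x= x]]$ or, more simply, one only needs $Fin$ on idempotents since $\mathcal{F}in\subseteq\B$.)

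Next I would verify the two directions. For the ``if'' direction, suppose such $g,h$ exist with $[[f=1]]\subseteq[[gh=1\wedge\varphi(g)\wedge\neg\varphi(h)]]$. Since $[[gh=1\wedge\varphi(g)\wedge\neg\varphi(h)]]\subseteq[[\neg\varphi(h)]]$, and $[[\neg\varphi(h)]]\in\mathcal{F}in$ by Axiom 5, and $\mathcal{F}in$ is an ideal (so closed downward) by the $T^{fin}$ part of Axiom $4^{fin}$, we get $[[f=1]]\in\mathcal{F}in$; but for an idempotent $f$ one has $[[f=1]]=f$ (this is the standard identification of an idempotent with its support, following from Lemma \ref{lem1}/Lemma \ref{lem2} and Axiom 3), hence $f\in\mathcal{F}in$. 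For the ``only if'' direction, suppose $f\in\mathcal{F}in$, i.e. $[[f=1]]=f\in\mathcal{F}in$; then hypothesis $(\sharp)$ applied to $e=f$ furnishes exactly the required $g,h$. Combining the two, $f\in\mathcal{F}in$ iff the displayed formula holds, which is the desired definability.

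I would expect the only real subtlety — and hence the step to state carefully rather than wave at — to be the reduction showing that the relation $[[f=1]]\subseteq[[gh=1\wedge\varphi(g)\wedge\neg\varphi(h)]]$ is genuinely first-order in the language of rings, i.e. that Boolean containment of these particular Boolean values is $\cL_{rings}$-definable. This follows because $\leq$ on $\B$ is $\cL_{rings}$-definable (as noted right after the definition of $\B$), and, crucially, for atomic (and hence quantifier-free) $\Theta$ the Boolean value $[[\Theta]]$ is itself $\cL_{rings}$-definable: by Axiom 3 and Lemma \ref{lem3-5}, one can compute $[[\Theta(f_1,\dots,f_n)]]$ uniformly as a term-like expression in the $f_i$ built from the Boolean operations, so ``$a\leq[[\Theta(\bar f\,)]]$'' unwinds to a ring formula. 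The formula $\varphi$ itself is fixed throughout, so everything is uniform in $\varphi$ in the evident sense, and I would remark that $(\sharp)$ is the only extra hypothesis used, matching the statement. Finally I would observe that since $(\sharp)$ holds for $\A_{\Q}$ (as already noted in the text), this recovers the $\cL_{rings}$-definability of $\mathcal{F}in$ in the adelic case, tying back to \cite{DM-ad}.
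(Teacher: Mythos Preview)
Your overall strategy matches the paper's exactly: the paper records $e=[[e=1]]$ and then writes
\[
e\in\mathcal{F}in \;\Longleftrightarrow\; \exists g\,\exists h\,\bigl(e\leq[[gh=1\wedge\varphi(g)\wedge\neg\varphi(h)]]\bigr),
\]
with the two directions being precisely the two paragraphs preceding the theorem (Axiom~5 plus downward closure of the ideal $\mathcal{F}in$ one way, hypothesis $(\sharp)$ the other).

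There is, however, a concrete error in your displayed ``explicit'' formula. If $R\models gh=1$ then $1-gh=0$, so $f(1-gh)=0$ holds automatically; your formula therefore collapses to $\exists g\,\exists h\,(gh=1\wedge\varphi(g)\wedge\neg\varphi(h))$, a sentence in which $f$ no longer appears. The deeper point is that the conditions $gh=1$, $\varphi(g)$, $\neg\varphi(h)$ are not meant to hold globally in $R$ but only \emph{stalkwise} at each atom below $f$; that is exactly what the Boolean inequality $f\leq[[gh=1\wedge\varphi(g)\wedge\neg\varphi(h)]]$ encodes, and it is not the same as your displayed condition.

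There is also a gap in your justification that the Boolean-value inequality is $\cL_{rings}$-definable. You argue only for atomic (and hence quantifier-free) $\Theta$, but the formula in question involves $\varphi$, and $\varphi$ is the fixed restricting formula, which in general carries quantifiers (for the adeles it is $\exists\forall$). What one actually needs is that for \emph{every} $\cL_{rings}$-formula $\Theta$ the relation $e=[[\Theta(\bar f\,)]]$ is $\cL_{rings}$-definable in $e,\bar f$. This holds: relativize $\Theta$ to the definable subring $aR\cong R_a$ to obtain an $\cL_{rings}$-formula expressing ``$a$ is an atom and $R_a\models\Theta((\bar f)_a)$'', and then use atomicity (Axiom~1) together with Axiom~2 to characterize $[[\Theta(\bar f\,)]]$ as the unique idempotent whose atoms are exactly those. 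The paper simply takes this for granted; your attempt to justify it is a good instinct, but restricting to the quantifier-free case does not cover the situation at hand.
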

\begin{proof} $e=[[e=1]]$, so
$$e\in \mathcal{F}in \Leftrightarrow \exists g \exists h (e\leq [[gh=1 \wedge \varphi(g) \wedge \neg \varphi(h)]]).$$ 
\end{proof}

\bibliographystyle{acm}
\bibliography{bibadeles}

\end{document}